\newtheorem{theorem}{Theorem}
\newtheorem*{remark}{Remark}
\Crefname{conjecture}{Conjecture}{Conjectures}
\theoremstyle{plain}
\theoremstyle{plain}
\author[Schneider]{Robert Schneider}
\address{Department of Mathematics\newline
University of Georgia\newline
Athens, Georgia 30602, U.S.A.}
\email{robert.schneider@uga.edu}
\title{Nuclear partitions and a formula for $p(n)$}
\begin{document}
 
\begin{abstract}
Define a ``nuclear partition'' to be an integer partition with no part equal to one. In this study we prove a simple formula to compute the partition function $p(n)$ by counting only the nuclear partitions of $n$, a vanishingly small subset by comparison with all partitions of $n$ as $n\to \infty$. Variations on the proof yield other formulas for $p(n)$, as well as Ramanujan-like congruences and an application to parity of the partition function.  \end{abstract}

\maketitle

\section{Nuclear partitions}
Let $\mathcal P$ denote the set of integer partitions.\footnote{See Andrews \cite{Andrews_theory} for further reading about partitions.} Here we count partitions of $n$ 
via a natural subclass of partitions we refer to as {\it nuclear partitions}, which are partitions having no part equal to one. 
Let us denote the nuclear partitions by $\mathcal N\subset \mathcal P$, 
and let $\mathcal N_n$ denote nuclear partitions of $n\geq 0$. 
We use atomic terminology because partitions in $\mathcal N$ generate the rest of the set $\mathcal P$ through an algorithm resembling nuclear decay, which we detail below. 

Let $p(n)$ denote the {\it partition function}, i.e., the number of partitions of $n\geq 0$, where $p(0):=1$. This important arithmetic function grows rapidly but somewhat irregularly; due to  deep connections in algebra and physics, as well as number theory and combinatorics, intense interest surrounds its behavior. While Euler's product-sum generating function for the partition function and his recursive pentagonal number theorem both give ways to produce the sequence of values $p(n)$ computationally (see \cite{Andrews_theory}), an explicit formula quantifying the complicated behavior of the partition function was out of reach until 1938 when Rademacher, extending an asymptotic formula of Hardy and Ramanujan, proved an infinite series exactly equal to $p(n)$ \cite{Rademacher}. In 2007, Bringmann-Ono proved a beautiful {\it finite} formula for $p(n)$ as a sum of algebraic integers \cite{BO}, and Choliy-Sills in 2016 gave a purely combinatorial finite formula for the partition function \cite{CS}.  While we do not present closed formulas of this type, here we extract information to compute $p(n)$ from the nuclear partitions of $n$ aside from the partition $(n)$ into just one part, i.e., the set $\mathcal N_n\backslash (n)$, which has significantly fewer elements than the set $\mathcal P_n$ of partitions of $n$. 

For $n\geq 1$, let $\nu(n)$ count the number of nuclear partitions of $n$, with $\nu(0):=1$ (noting $\nu(1)=0$). Clearly we have the recursive relation $p(n)=\nu(n)+p(n-1)$.\footnote{$\nu(n)$ represents the first difference of $p(n)$. Thus $\nu(n)$ has the generating function ${(q^2;q)_{\infty}^{-1}}$, where $(z;q)_{\infty}:=\prod_{n=0}^{\infty}(1-z q^{n}),\  z,q\in \mathbb C, |q|<1.$} Then 
\begin{equation}\label{Achain}
p(n)=\nu(0)+\nu(1)+\nu(2)+\nu(3)+\cdots+\nu(n).
\end{equation}
So to count partitions of $n$, we need only enumerate nuclear partitions with sizes up to $n$. 

Let $\mu=(\mu_1,\mu_2,\dots,\mu_r),\  \mu_1\geq\mu_2\geq \dots \geq  \mu_r \geq 2$, denote a nuclear partition. 

\begin{theorem}\label{Acountingthm}
We have that
$$p(n)=n+\nu(n)-1+\sum_{\mu\in \mathcal N_n\backslash(n)}(\mu_1-\mu_2),$$
with the right-hand sum taken over nuclear partitions of $n$ apart from the partition $(n)$.
\end{theorem}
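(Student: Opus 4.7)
The plan is to give a combinatorial interpretation of the weighted sum $\sum_{\mu}(\mu_1 - \mu_2)$ via a bijection that matches each of its terms with a nuclear partition of a smaller integer, and then to fold in the recursion \eqref{Achain}.

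First, I would define a ``decay'' map $\Phi$ on ordered pairs $(\mu, k)$, where $\mu = (\mu_1, \ldots, \mu_r) \in \mathcal{N}_n \setminus \{(n)\}$ (so necessarily $r \geq 2$) and $1 \leq k \leq \mu_1 - \mu_2$, by
\begin{equation*}
\Phi(\mu, k) := (\mu_1 - k,\, \mu_2, \ldots, \mu_r).
\end{equation*}
The bound $k \leq \mu_1 - \mu_2$ keeps the parts weakly decreasing, and $\mu_2 \geq 2$ keeps the image nuclear, so $\Phi(\mu,k)$ is a nuclear partition of $n - k$ having at least two parts. The inverse, which adds $n - m$ back onto the largest part of a nuclear partition $\lambda$ of $m$ with at least two parts, is clearly well defined. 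Fibering the resulting bijection over the target size $m$ gives
\begin{equation*}
\sum_{\mu \in \mathcal{N}_n \setminus (n)} (\mu_1 - \mu_2) \;=\; \sum_{m=0}^{n-1} \nu_{\geq 2}(m),
\end{equation*}
where $\nu_{\geq 2}(m)$ denotes the number of nuclear partitions of $m$ with at least two parts.

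Second, I would express $\nu_{\geq 2}(m)$ in terms of $\nu(m)$. Apart from $m = 1$ (where $\nu(1) = 0$ and there are no nuclear partitions at all), every $m \geq 0$ has exactly one nuclear partition with fewer than two parts: the singleton $(m)$ for $m \geq 2$, or the empty partition for $m = 0$. Hence $\nu_{\geq 2}(m) = \nu(m) - 1$ for $m \neq 1$ while $\nu_{\geq 2}(1) = 0$, so $\sum_{m=0}^{n-1} \nu_{\geq 2}(m) = \sum_{m=0}^{n-1} \nu(m) - (n-1)$. Invoking \eqref{Achain} in the form $\sum_{m=0}^{n-1} \nu(m) = p(n-1) = p(n) - \nu(n)$ and rearranging produces the claimed formula.

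The main technical hurdle is really just the careful bookkeeping of the singleton and empty-partition edge cases, which is exactly what yields the ``$n-1$'' correction on the right-hand side and justifies excluding the partition $(n)$ from the sum. A sanity check at small values, e.g.\ $n = 4, 5, 6$, confirms the arithmetic.
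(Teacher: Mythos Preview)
Your proof is correct. Both your argument and the paper's hinge on the same elementary move---subtract some $k \le \mu_1 - \mu_2$ from the largest part of a nuclear $\mu$---but you send the result to a different target. The paper reattaches the $k$ units as $k$ ones, producing a non-nuclear partition of the \emph{same} size $n$; this yields a bijection between pairs $(\mu,k)$ (including $\mu=(n)$, which contributes $n-1$ pairs) and the $p(n)-\nu(n)$ non-nuclear partitions of $n$, and the formula follows in one line. You instead discard the $k$ units, landing among nuclear partitions of \emph{smaller} size with at least two parts, and then invoke \eqref{Achain} to convert $\sum_{m<n}\nu(m)$ back into $p(n)-\nu(n)$. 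The two bijections are intertwined by the ``delete all 1's'' map from non-nuclear partitions of $n$ to nuclear partitions of $m<n$. The paper's route is slightly more direct (one bijection, no appeal to the recursion), while yours makes the link to \eqref{Achain} explicit and locates the $n-1$ correction in the singleton/empty-partition bookkeeping rather than in the special decay of $(n)$.
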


To compute $p(n)$ from Theorem \ref{Acountingthm}, one can follow these steps: 

\begin{enumerate}[1.]
\item Write down the partitions of $n$ containing no 1's aside from $(n)$ itself, that is, the subset $\mathcal N_n \backslash (n)$. This is a relatively small subset of the partitions of $n$, as we prove below. For example, to find $p(6)$ we use $\mathcal N_6\backslash(6)=\{(4,2), (3,3), (2,2,2)\}$.
\item Write down the difference $\mu_1-\mu_2\geq 0$ between the first part and the second part of each partition from the preceding step. In the present example, we write down $$4-2=2,\  \  \  \  3-3=0,\  \  \  \  2-2=0.$$
\item Add together the differences obtained in the previous step, then add the result to $n+\nu(n)-1$ to arrive at $p(n)$. In this example, we add $2+0+0=2$ from the previous step to $6+\nu(6)-1=6+4-1$, arriving at $p(6)=6+4-1+2=11$, which of course is correct.  
\end{enumerate}

\begin{proof}[Proof of Theorem \ref{Acountingthm}]
Let $m_k(\lambda)$ denote the {\it multiplicity} of $k\geq 1$ as a part of partition $\lambda=(\lambda_1, \lambda_2,\dots, \lambda_r),\  \lambda_1\geq \lambda_2\geq \dots \geq \lambda_r\geq 1$. Observe that every nuclear partition of $n$ can be formed by adding $m_1(\lambda)$ to the largest part $\lambda_1$ of a ``non-nuclear'' partition $\lambda\vdash n$, and deleting all the 1's from $\lambda$, e.g., $(3,2,1,1)\to (5,2)$. Conversely, every nuclear partition $\mu \vdash n$ can be turned into a non-nuclear partition of $n$ by decreasing the largest part $\mu_1$ by some positive integer $j\leq \mu_1-\mu_2$, and adjoining $j$ 1's to form the non-nuclear partition, or else $ \mu_1-\mu_2=0$ which we think of as the ``ground state''. 

So (non-ground state) nuclear partitions of $n$ ``decay'', by giving up 1's from the largest part, into non-nuclear partitions of $n$, e.g., $(5,2)\to(4,2,1)\to (3,2,1,1)\to(2,2,1,1,1)$, of which the total number is $p(n)-\nu(n)$. Each nuclear partition $\mu$ decays into $\mu_1-\mu_2$ different non-nuclear partitions except the partition $(n)$, which decays into $n-1$ non-nuclear partitions, viz. $(n)\to(n-1,1)\to (n-2,1,1)\to \dots \to (1,1,1,\dots,1)$. Therefore, the number of non-nuclear partitions of $n$ is $p(n)-\nu(n)=(n-1)+\sum_{\mu\in \mathcal N_n\backslash(n)}(\mu_1-\mu_2).$
\end{proof}

It is interesting to observe how the subset $\mathcal N\subset \mathcal P$ produces the entire set $\mathcal P$ by the ``decay'' process described in the proof above. To prove this, assume otherwise, that for some $n\geq 0$ there is a non-nuclear partition $\phi=(\phi_1, \phi_2, \dots, \phi_r)$ of $n$ that is {\it not} produced by the decay of some partition in $\mathcal N_n$. Then deleting all the $1$'s from $\phi$ and adding them to the largest part $\phi_1$ produces a nuclear partition of $n$ that decays into $\phi$, a contradiction. 

One can see that the subset $\mathcal N_n$ indeed grows much more slowly than $\mathcal P_n$ from the Hardy-Ramanujan asymptotic for the partition function (see \cite{Andrews_theory}),
\begin{equation}\label{H-R}
p(n) \sim \frac{e^{A\sqrt{n}}}{Bn}\  \  \text{with} \  \  A=\pi\sqrt{2/3},\  \  B=4\sqrt{3}.
\end{equation}
Applying \eqref{H-R} in the relation $\nu(n)=p(n)-p(n-1)$ gives, after a little algebra,
\begin{equation}\label{H-R2}
\nu(n)\  \sim\  \frac{e^{A\sqrt{n}}\left( 1- e^{-A\left(\sqrt{n}-\sqrt{n-1} \right)} \right)}{Bn}\  \sim\  \frac{A \cdot e^{A\sqrt{n}}\left(\sqrt{n}-\sqrt{n-1}\right)}{Bn} ,
\end{equation}
where on the right we use that $1-e^{-x} \sim x$ as $x\to0^+$. Then using \eqref{H-R} and \eqref{H-R2} to compute $\lim_{n\to \infty} \nu(n)/p(n)=0$, we see that $\nu(n) = o\left(p(n)\right)$.\footnote{More refined estimates for $\nu(n)$ can be found, e.g. see Eq. (1.2) of \cite{S}.} 

Please refer to Table 1 for an explicit comparison of the first few values of each function.

\section{Ground state nuclear partitions}
Let $\gamma(n)$ denote the number of nuclear partitions $\mu=(\mu_1, \mu_2, \dots, \mu_r)$ of $n$ such that 
$\mu_1=\mu_2$ (the first two parts are equal), 
setting $\gamma(0):=0$ and noting $\gamma(1)=\gamma(2)=\gamma(3)=0$, as well. As in the previous section, we refer to this even sparser subset $\mathcal G \subset \mathcal N$ as {\it ground state nuclear partitions}, with $\mathcal G_n$ denoting ground state nuclear partitions of $n$. In fact, the set $\mathcal N$ itself can be recovered from information about the subset $\mathcal G$.

\begin{table}
\vskip.1in  \begin{center}
\begin{tabular}{ | c | c | c | c | }
\hline
{\bf $n$} &{\bf $\gamma(n)$} & {\bf $\nu(n)$}&  {\bf $p(n)$}\\ \hline
1 & 0 & 0 & 1   \\ \hline
2 & 0 & 1 &  2   \\ \hline
3 & 0 & 1 & 3   \\ \hline
4 & 1 & 2 & 5  \\ \hline
5 & 0 & 2 & 7  \\ \hline
6 & 2 & 4 & 11   \\ \hline
7 & 0 & 4 & 15   \\ \hline
8 & 3 & 7 & 22 \\  \hline
9 & 1 & 8 & 30   \\  \hline
10 & 4 & 12 & 42  \\  \hline
11 & 2 & 14 & 56  \\  \hline
12 & 7 & 21 & 77  \\  \hline
13 & 3 & 24 & 101  \\  \hline
14 & 10 & 34 & 135  \\  \hline
15 & 7 & 41 & 176  \\  \hline
16 & 14 & 55 & 231  \\  \hline
17 & 11 & 66 & 297  \\  \hline
18 & 22 & 88 & 385  \\  \hline
19 & 17 & 105 & 490  \\  \hline
20 & 32 & 137 & 627  \\  \hline
$\cdots$ & $\cdots$ & $\cdots$ & $\cdots$ \\  \hline
$100$ & \  $2{,}307{,}678$\   & \  $21{,}339{,}417$\   & \  $190{,}569{,}292$\  \\
\hline
\end{tabular}
\label{Table}
\smallskip
\smallskip
\smallskip
\caption{Comparing the growth of $\mathcal G_n, \mathcal N_n$ and $\mathcal P_n$}\end{center}
\end{table}

Clearly we have for $n\geq 3$ the recursion $\nu(n)=\gamma(n)+\nu(n-1)$, viz. adding 1 to the largest part of every nuclear partition of $n-1$ gives the nuclear partitions $\mu$ of $n$ with $\mu_1>\mu_2$.\footnote{$\gamma(n)$ represents the second difference of $p(n)$, thus its generating function is $\frac{1}{(1+q)(q^3;q)_{\infty}}-1+q-q^2$.} Moreover, noting $\nu(2)=1$, we have for $n\geq 2$ that  
\begin{equation}\label{nu_recurrence} \nu(n)=1+\gamma(3)+\gamma(4)+\cdots +\gamma(n).
\end{equation} 
Much as nuclear partitions ``control'' the growth of $p(n)$, these ground state nuclear partitions control the growth of $\nu(n)$, and thus appear to fundamentally control $p(n)$:
\begin{flalign}\label{nu_rec2}
p(n)&=n+(n-2)\gamma(3)+(n-3)\gamma(4)+(n-4)\gamma(5)+\cdots + 2\gamma(n-1)+\gamma(n),
\end{flalign}
which comes from combining \eqref{Achain} and \eqref{nu_recurrence}. This computation involves generating substantially fewer than even the $\#\mathcal N_n$ partitions enumerated by $\nu(n)$, to arrive at $p(n)$ (although as $n$ increases this is still a nontrivial task, as the $n=100$ row of Table 1 reveals). One can see from \eqref{H-R2} and the recursion $\gamma(n)=\nu(n)-\nu(n-1)$ that in fact
\begin{flalign}\label{H-R3}
\gamma(n) &\sim \frac{e^{A\sqrt{n}}\left( e^{-A\left(\sqrt{n-1}-\sqrt{n-2} \right)} - e^{-A\left(\sqrt{n}-\sqrt{n-1}\right)} \right)}{Bn}\\ \nonumber \\
\nonumber 
&\sim \frac{A \cdot e^{A\sqrt{n}}\left(\sqrt{n}-2\sqrt{n-1}+\sqrt{n-2}\right)}{Bn}.
\end{flalign}
Using this asymptotic along with \eqref{H-R2} to compute  $\lim_{n\to \infty} \gamma(n)/\nu(n)$ (applying L'Hospital's rule in the variable $n$) gives that $\gamma(n) = o\left( \nu(n) \right)$; moreover, $\gamma(n)/p(n)\to 0$ quite rapidly.

For a more concrete look at the relative growths of $\gamma(n), \nu(n)$ and $p(n)$, the reader is referred to Table 1.\footnote{The author computed rows $n=1$ to $20$ of Table 1 by hand: first compute the $\gamma(n)$ values by generating ground state nuclear partitions, then use those values and the recursion $\nu(k+1)=\nu(k)+\gamma(k+1)$ to fill in the $\nu(n)$ column, and finally use those values and $p(k+1)=p(k)+\nu(k+1)$ to fill in the $p(n)$ column.}
Inspecting the columns of the table, an immediate difference between $\gamma, \nu$ and $p$ is that, while the latter two functions are always increasing (at least weakly) for $n\geq 1$, the function $\gamma(n)$ oscillates somewhat irregularly in magnitude 
 --- this contributes to the locally irregular fluctuations of the partition function.
 
For instance, one sees in Table 1 that certain integers $k\leq n$ with abnormally large values of $\gamma(k)$ by comparison to other nearby integers, make a larger impact on the growth of $p(n)$ in the table, causing locally bigger jumps in the value of the function. One such class appears to be the integers with abnormally many divisors, since each divisor $d$ of $k$, $d\neq 1$,  produces the new ground state nuclear partition $(d,d,\dots,d)$ with $k/d$ repetitions.\footnote{However, it is not obvious that this makes a significant impact on the behavior of $p(n)$ as $n\to \infty$, as Dennis Eichhorn noted to the author (personal communication, June 8, 2019).} Identifying integers having abnormally many (or few) ground state nuclear partitions could provide information about the behavior of $p(n)$.
 

\section{Further observations and directions for study}

For $m\geq 1$, let $\nu(n,m)$ denote the number of nuclear partitions of $n$ whose parts are all $\leq m$. Then it is easily verified that we can write $\nu(n)$ for $n\geq4$ as follows: 
\begin{equation}
\nu(n)=\sum_{k=2}^{n-2}\nu(k,n-k).
\end{equation}
Combining this identity with Theorem \ref{Acountingthm} and \eqref{nu_recurrence} above, and making further simplifications, the task of computing $p(n)$ can be reduced to counting much smaller subsets of ground state nuclear partitions of integers up to $n-2$. These small subsets $\mathcal G_k$ of partitions of integers $k \leq n-2$ completely encode the value of $p(n)$.

More generally, we might let $\nu_k(n)$ denote the number of partitions of $n$ having no part equal to $k$ --- let us refer to these as {\it $k$-nuclear partitions} --- 
setting $\nu_k(0):=1$ for all $k\geq 1$; thus $\nu(n)=\nu_1(n)$. Let $\mathcal N^k$ denote the set of all $k$-nuclear partitions, and let $ \mathcal N_n^k$ be $k$-nuclear partitions of $n$; thus $\mathcal N=\mathcal N^1, \  \mathcal N_n=\mathcal N_n^1$. Clearly we have $p(n)=\nu_k(n)+p(n-k)$, so $\nu_k(n)$ is subject to essentially the same treatment as $\nu(n)$.\footnote{Thus $\nu_k(n)$ has generating function $\frac{1-q^k}{(q;q)_{\infty}}$.} 

Then by recursion, as previously, we have 
\begin{equation} p(n)=p\left(n-\left\lfloor{\frac{n}{k}}\right\rfloor k\right)+\sum_{j=1}^{\left\lfloor{n/k}\right\rfloor} \nu_k(n-jk),\end{equation}
where $\left\lfloor{x}\right\rfloor$ is the floor function. One could likely then generalize Theorem \ref{Acountingthm} (the $k=1$ case) by carefully carrying out similar steps to the theorem's proof, but using decay into $k$'s instead of 1's, e.g. $(n) \to (n-k, k)\to (n-2k, k, k) \to (n-3k, k, k, k)  \to \cdots$.
%
%
%
%
 
Considering Table 1 again, certain numerical patterns stand out. For instance, based on even rows $n=2m$ in the table, it looks like $p(2m) \approx \sqrt{2m} \cdot \nu(2m)\approx 2m \cdot \gamma(2m)$, but this could be a coincidence due to the small number of entries available.\footnote{Indeed, it would be surprising, e.g. one expects $\nu(2m)/p(2m) \sim A\cdot\left(\sqrt{2m}-\sqrt{2m-1}\right)$ by \eqref{H-R}, \eqref{H-R2}.}  
We also see all three entries of row $n=12$ are divisible by 7, and the three entries of row $17$ are divisible by $11$; these numerics are reminiscent of the Ramanujan congruences \cite{Ramanujan}
\begin{equation}
p(5n+4)\equiv 0\  (\text{mod}\  5),\  \  \  \  \    p(7n+5)\equiv 0\  (\text{mod}\  7),\  \   \  \  \  p(11n+6)\equiv 0\  (\text{mod}\  11),
\end{equation} 
for nonnegative $n$. Since $p(5n+4)-p\left(5n-1\right)\equiv 0\  (\text{mod}\  5)$ for $n\geq 1$, then \eqref{Achain} gives
\begin{equation}\sum_{k=5n}^{5n+4}\nu(k)\equiv 0\  (\text{mod}\  5).\end{equation}
The same argument yields
\begin{equation}\nu_5(5n+4)\equiv 0\  (\text{mod}\  5),\end{equation}
and, likewise, the $p(7n+5)$ and $p(11n+6)$ congruences give  
 \begin{flalign}\sum_{k=7n}^{7n+5}\nu(k)\equiv 0\  (\text{mod}\  7),\  \  \  \  \  \    &\nu_7(7n+5)\equiv 0\  (\text{mod}\  7),\\ \sum_{k=11n}^{11n+6}\nu(k)\equiv 0\  (\text{mod}\  11),\  \  \  \  \  \    &\nu_{11}(11n+6)\equiv 0\  (\text{mod}\  11).\end{flalign}
Similar generalizations and congruences presumably extend to ground state nuclear partitions. It seems that proceeding in this direction could address the observations about all three congruences across rows $n=12,17$, along with more general cases.

Other congruences can be found readily; further combining the above formulas gives
\begin{equation}
\gamma(5n+1)+2\gamma(5n+2)+3\gamma(5n+3)+4\gamma(5n+4)\equiv 0\  (\text{mod}\  5),
\end{equation}
and likewise for the modulo 7 and 11 cases. If these kinds of congruences 
could be established directly, then working in the opposite direction, one might 
prove the Ramanujan 
congruences by induction.

We can also find the parity of $p(n)$ (and more generally, compute $p(n)$ modulo $m$, $m\geq 2$) from values of $\gamma(k),k\leq n$. For instance, one can rewrite \eqref{nu_rec2} in the form
\begin{equation}
p(n)=n\cdot \nu(n)-\sum_{k=3}^{n} (k-1)\gamma(k),
\end{equation}
which, for $n\geq 4$ an even number, reduces to
\begin{equation}\label{mod_2}
p(n)\equiv \gamma(4)+ \gamma(6) + \gamma(8) + \cdots  + \gamma(n-2) + \gamma(n)\  \   (\operatorname{mod} 2).
\end{equation} 
To compute an example using \eqref{mod_2}, from the $\gamma(n)$ column of Table 1 we find $p(20)$ is congruent to $1+2+3+4+7+10+14+22+32=95$ modulo 2, confirming $p(20)$ is odd. 

\begin{remark}
For further resources on first, second and higher-order differences of the partition function, see \cite{M,O}.
\end{remark}

\section*{Acknowledgments}
The author is grateful to Matthew Just for noting corrections in this paper, to Dennis Eichhorn for a helpful discussion about ground state nuclear partitions, and to Brian Hopkins and the anonymous referee for suggestions that greatly strengthened this work.

\end{document}